\documentclass[12pt]{article} 
\usepackage{amssymb}
\usepackage{amsmath,bm}
\usepackage{graphics}
\usepackage{color}
\usepackage{xcolor}
\usepackage{xypic}
\usepackage{tikz}
\usepackage[utf8]{inputenc}                    
\usepackage{microtype}                         
\usepackage{mathtools, amsthm, amssymb, eucal} 
\usepackage[normalem]{ulem}                    
\usepackage{mdframed}
\usepackage{verbatim}
\usepackage{booktabs} 
\usepackage{mathrsfs}
\usepackage{titling}
\usepackage{xypic}
\usepackage[paper=letterpaper, margin=1in, headsep=20pt]{geometry}
\usepackage{hyperref}
\usepackage{enumerate}
\usepackage{multicol}
\setlength{\columnsep}{1cm}

    \theoremstyle{plain}
    \newtheorem{thm}{Theorem}[section]
    
    \newtheorem{prop}[thm]{Proposition}
    \newtheorem{cor}[thm]{Corollary}

    \theoremstyle{definition}
    \newtheorem{defn}{Definition}[section]

    \theoremstyle{remark}
    \newtheorem{rem}{Remark}[section]

\input prepictex   \input pictex    \input postpictex

\def\mput #1 #2/{\put{$\bullet_{#1}$} [tl] <-1mm,1mm> at #2}

\begin{document}

\centerline{\textbf{\Large A 4-fold categorical equivalence}}
\medskip
\centerline{\large Ray Maresca}

\begin{abstract}

\noindent
In this note, we will illuminate some immediate consequences of work done by Reineke in [\ref{ref: every projective variety is a}] that may prove to be useful in the study of elliptic curves. In particular, we will construct an isomorphism between the category of smooth projective curves with a category of quiver grassmannians. We will use this to provide a 4-fold categorical equivalence between a category of quiver grassmannians, smooth projective curves, compact Riemann surfaces and fields of transcendence degree 1 over $\mathbb{C}$. We finish with noting that the category of elliptic curves is isomorphic to a category of quiver grassmannians, whence providing an analytic group structure to a class of quiver grassmannians.
\end{abstract}

\section{Introduction}

\indent

It is well known that there is a three-fold equivalence between the categories of compact Riemann surfaces, fields of transcendence degree 1 over $\mathbb{C}$, and smooth projective curves [\ref{ref: harshorne}] and [\ref{ref: elliptic curves reference}]. A more recent development is the notion of quiver grassmannians, first introduced by Schofield in [\ref{ref: quiver grassmannians intro}]. Since their introduction, they have become a popular topic of research. It has been known that quiver grassmannians are projective varieties, but just how much projective geometry is captured by quiver grassmannians was unclear until the early 2010's. A famous result of Hille [\ref{ref: Hille generalized Kronecker}], Huisgen-Zimmermann [\ref{ref: H-Z}], and Reineke [\ref{ref: every projective variety is a}], is that all projective varieties can be realized as quiver grassmannians for some wild acyclic quiver $Q$. Actually, even more is true. Expanding on his work in [\ref{ref: generalized Kronecker}] in which he proved the result for a generalized Kronecker quiver, Ringel showed in [\ref{ref: generalizaion of Reineke result}], the incredible result that given \textit{any} wild quiver $Q$, we can realize \textit{all} projective varieties as the quiver grassmannian of a suitable $Q$-representation. It may be interesting to ask, is there is a `best' quiver with which to study projective varieties, and if not, which quivers are `better' in which circumstances? Another natural question to ask is, can we restrict the quiver $Q$ and still get a similar result? Ringel showed in [\ref{ref: ringel Auslander varieties}] that the answer to this question is partially yes. Namley, Ringel showed that for a (controlled) wild algebra, any projective variety can be realized as an Auslander variety, but not necessarily as a quiver Grassmannian.\\

In this note, we will use the construction given by Reineke in [\ref{ref: every projective variety is a}] to define a functor from the category of smooth projective curves to a subcategory of quiver grassmannians. We will show that this functor is an isomorphism of categories, which will ultimately yield a four-fold categorical equivalence. We finish this note with some immediate consequences regarding elliptic curves.   \\ 

\noindent
\textbf{Acknowledgments:} The author would like to thank Rahul Krishna for the useful conversations. He also thanks an anonymous referee for pointing out several important references.

\section{Preliminaries}

To establish the equivalence, we will first recall some definitions.

\subsection{Projective Varieties}

\indent

Following [\ref{ref: elliptic curves reference}], let $\Bbbk$ denote a perfect field. We begin by recalling that \textbf{projective} $n$-space over a field $\Bbbk$ is defined as  $\mathbb{P}^n_{\overline{\Bbbk}} = \mathbb{P}^n = {\overline{\Bbbk}^{n+1} \over \sim }$ where $(z_0, \dots , z_n) \sim (z'_0, \dots , z'_n)$ if and only if there exists $\lambda \in \overline{\Bbbk}^*$ such that $(\lambda z_0, \dots , \lambda z_n) = (z'_0, \dots , z'_n)$ and $\overline{\Bbbk}$ denotes the algebraic closure of $\Bbbk$. Denote by $[z_0,\dots, z_n]$ the \textbf{class} of $(z_0, \dots , z_n)$ under the aforementioned quotient map. Let $R = \overline{\Bbbk}[x_0,\dots,x_n]$ be the polynomial ring in $n+1$ variables over $\overline{\Bbbk}$. A polynomial $P\in R$ is \textbf{homogeneous} of degree $d$ if $P(\lambda x) = \lambda^d P(x)$ for all $\lambda \in \overline{\Bbbk}^*$. An ideal $I \subset R$ is \textbf{homogeneous} if $I$ is generated by homogeneous polynomials. A \textbf{projective algebraic set} is some subset of $\mathbb{P}^n$ of the form $V(I) = \{[x_0,\dots, x_n] \in \mathbb{P}^n : P(x) = 0 \, \, \text{for all homogeneous} \, \, P\in I \subset R\}$ where $I$ is a homogeneous ideal. A \textbf{projective algebraic variety} is $V(I)$ for $I$ a prime homogeneous ideal of $R$.\\ 

We define the \textbf{field of rational functions} of $\mathbb{P}^N$ by $\overline{\Bbbk}(\mathbb{P}^N) = \{{f \over g}\}$ where $f,g \in R$, $g \neq 0$ and both $f$ and $g$ are homogeneous of the same degree. The \textbf{field of rational functions} of a projective variety $V\subset \mathbb{P}^N$ is defined as $\overline{\Bbbk}(V) = {\overline{\Bbbk}(\mathbb{P}^N) \over \sim}$ where ${f_1 \over g_1} \sim {f_2 \over g_2}$ if and only if $f_1g_2 - f_2g_1 \in I(V)$ where $I(V) = \{ \text{homogeneous} \, \, P\in R : P(x) = 0 \, \, \text{for all} \, \, x \in \overline{\Bbbk}^{n+1}\}$. We define a \textbf{rational map} between a projective variety $V\subset \mathbb{P}^N$ and $\mathbb{P}^M$ as the data of $M+1$ elements of $\overline{\Bbbk}(V)$. A \textbf{rational map} $V \rightarrow V' \subset \mathbb{P}^M$ is a rational map $V \rightarrow \mathbb{P}^M$ such that $[f_0, \dots , f_M](x) \in V'$ for all $x\in V$ for which $[f_0 , \dots , f_M](x)$ is defined. A rational map between varieties is called a \textbf{morphism} if it is defined everywhere. \\

The \textbf{dimension} of a projective variety is the transcendence degree of $\overline{\Bbbk}(V)$ over $\overline{\Bbbk}$. Projective varieties $V\subset \mathbb{P}^2$ of dimension one are called \textbf{projective curves}. A projective variety is called \textbf{non-singular}, or \textbf{smooth}, if the dimension of its tangent space equals its dimension at every point. For more on projective algebraic geometry, see [\ref{ref: elliptic curves reference}] and [\ref{ref: harshorne}]. The following $3$-fold categorical equivalence is well known. \\

\begin{thm} \label{thm: 3- fold Categorical equivalence}
The following three categories are equivalent:
\begin{enumerate}
    \item Compact connected Riemann Surfaces with holomorphic maps.
    \item Field extensions of transcendence degree one over $\mathbb{C}$ with field morphisms.
    \item Smooth projective curves in $\mathbb{P}^2_{\mathbb{C}}$ with morphisms of varieties. \hfill $\square$
\end{enumerate}
\end{thm}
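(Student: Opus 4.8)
The plan is to treat category (2), the transcendence-degree-one extensions of $\mathbb{C}$, as a common pivot, and to exhibit categories (1) and (3) as each being anti-equivalent to it via a ``field of functions'' functor; composing one anti-equivalence with the inverse of the other then yields a covariant equivalence between (1) and (3), while reading category (2) with its arrows reversed (equivalently, replacing it by its opposite $(2)^{\op}$) makes all three equivalent. Concretely, I would define $G \colon (3) \to (2)^{\op}$ by sending a smooth projective curve $V$ to its function field $\mathbb{C}(V)$ and a dominant morphism to the pullback homomorphism, and $F \colon (1) \to (2)^{\op}$ by sending a compact connected Riemann surface $X$ to its field of meromorphic functions $\mathcal{M}(X)$ and a nonconstant holomorphic map to pullback of meromorphic functions. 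The whole argument then reduces to showing that $F$ and $G$ are each equivalences of categories, i.e.\ fully faithful and essentially surjective, with the same essential image.

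For $G$, I would first recall that $\mathbb{C}(V)$ has transcendence degree one over $\mathbb{C}$ precisely because $\dim V = 1$, so $G$ lands in (2). Faithfulness is immediate, since a dominant morphism of curves is determined by its action on rational functions; fullness and essential surjectivity are the content of the classical theorem that $V \mapsto \mathbb{C}(V)$ is an anti-equivalence between smooth projective curves over an algebraically closed field and their function fields. The two nontrivial inputs are: every finitely generated extension $K/\mathbb{C}$ of transcendence degree one is the function field of some smooth projective curve, obtained by taking a plane model of $K$ and normalizing (which in dimension one yields a smooth projective curve); and every $\mathbb{C}$-algebra homomorphism $\mathbb{C}(W) \to \mathbb{C}(V)$ comes from a dominant rational map $V \dashrightarrow W$ that, because $V$ is a smooth projective curve, automatically extends to a morphism by the valuative criterion of properness.

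For $F$ the analogous inputs are analytic. I would invoke Riemann's existence theorem to guarantee that $\mathcal{M}(X)$ separates points and tangent vectors and has transcendence degree one, so that $F$ is well defined and faithful, and that $X$ can be reconstructed from $\mathcal{M}(X)$ as the Riemann surface of that field, giving essential surjectivity onto the same class of fields that $G$ hits. Fullness amounts to the statement that every field homomorphism $\mathcal{M}(Y) \to \mathcal{M}(X)$ fixing $\mathbb{C}$ is induced by a unique nonconstant holomorphic map $X \to Y$.

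The main obstacle, and the conceptual heart of the theorem, is matching the analytic and algebraic pictures so that $F$ and $G$ have literally the same essential image and so that holomorphic and algebraic morphisms coincide. I would handle this through the analytification and comparison results: every compact Riemann surface is the analytification of a unique smooth projective curve (a theorem of Riemann whose modern form is an instance of Chow's theorem and of GAGA), and conversely every holomorphic map between two such analytifications is algebraic. Once this identification of objects and morphisms is in place, the composite $G^{-1} \circ F$ is the desired equivalence. I would also flag the minor point that not every smooth projective curve embeds in $\mathbb{P}^2$; either one interprets (3) as abstract smooth projective curves presented by a (possibly singular) plane model together with its normalization, or one restricts attention to curves admitting a smooth plane model, neither of which affects the function-field bookkeeping above.
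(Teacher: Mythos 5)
The paper contains no proof of this theorem: it is stated as well known, marked with a $\square$, and deferred to its references [2] and [9], so there is no internal argument to compare yours against. Your proposal is, in substance, exactly the standard argument found in those references (the anti-equivalence of smooth projective curves with their function fields via normalization of a plane model and the valuative criterion, as in Hartshorne, Ch.~I.6 and IV, together with the Riemann existence theorem and Chow/GAGA on the analytic side), and it is correctly assembled: pivoting through $(2)^{\mathrm{op}}$, establishing that $F$ and $G$ are fully faithful with the same essential image, and composing. You were also right to reverse the arrows on category (2); as literally stated the correspondence with fields is contravariant. One point you touch only implicitly deserves to be explicit: constant holomorphic maps and constant morphisms of varieties induce no homomorphism of function fields at all, so $F$ and $G$ are defined only on nonconstant (resp.\ dominant) morphisms, and the theorem is true only with that restriction on the hom-sets of (1) and (3) --- a defect of the statement itself, not of your argument.

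Your flag about $\mathbb{P}^2_{\mathbb{C}}$ identifies a genuine problem with the statement as printed: a smooth plane curve of degree $d$ has genus $\binom{d-1}{2} \in \{0,1,3,6,\dots\}$, so, e.g., no genus-$2$ Riemann surface is isomorphic to a smooth curve in $\mathbb{P}^2_{\mathbb{C}}$, and category (3) read literally is not equivalent to (1) or (2). Your first proposed repair (interpret (3) as abstract smooth projective curves, presented by a possibly singular plane model together with its normalization) is the correct reading. Your second repair, however, does not work as stated: restricting (3) to curves admitting a smooth plane model destroys the essential surjectivity of $G$ onto all transcendence-degree-one extensions, so the three categories would no longer match unless (1) and (2) were shrunk correspondingly. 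Since the rest of your argument quietly assumes every such field is hit, you should strike that alternative; with the first interpretation in place, your proof is complete and is the standard one.
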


\subsection{Quiver Grassmannians}

\indent

A \textbf{quiver} $Q$ is a directed graph. More formally, it is a $4$-tuple $Q = (Q_0,Q_1,s,t)$ where $Q_0$ is the \textbf{set of vertices}, $Q_1$ is the \textbf{set of arrows}, and $s$ and $t$ are maps that assign to each vertex a \textbf{starting} and \textbf{terminal} point respectively. For a field $\Bbbk$ that is usually taken to be algebraically closed but need not be, a \textbf{representation} $V$ of a quiver $Q$ is an assignment of a $\Bbbk$-vector space $V_i$ for each $i\in Q_0$ and a vector space morphism $\phi_{\alpha}:V_i \rightarrow V_j$ for each $\alpha \in Q_1$ such that $s(\alpha) = i$ and $t(\alpha) = j$. A \textbf{subrepresentation} $M = (M_i,\psi_{\alpha})$ of a representation $V = (V_i,\phi_{\alpha})$ is a representation of $Q$ such that $M_i \subset V_i$ is a sub vector space for all vertices $i$, $\psi_{\alpha}$ is the restriction of $\phi_{\alpha}$ to $M_{s(\alpha)}$, and $\psi_{\alpha}(M_i) \subset M_j$ for all arrows $\alpha:i \rightarrow j \in Q_1$. In other words, a subrepresentation is a collection of subspaces that are compatible with the morphisms defining the parent representation. The \textbf{dimension vector} of a representation is $\textbf{dim}V = ($dim$V_1, \dots , $dim$V_{|Q_0|})$. We call a representation $V$ \textbf{finite dimensional} if $V_i$ is a finite dimensional vector space for all $i\in Q_0$. \\

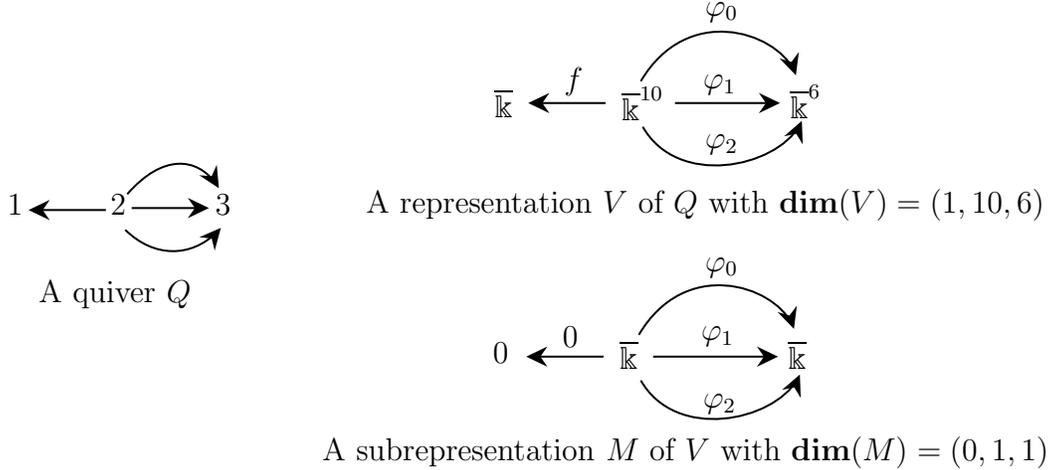
\begin{figure}
    \centering
\tikzset{every picture/.style={line width=0.75pt}} 

\begin{tikzpicture}[x=0.75pt,y=0.75pt,yscale=-1,xscale=1]

\draw    (142.22,113.01) -- (106.22,113.01) ;
\draw [shift={(103.22,113.01)}, rotate = 360] [fill={rgb, 255:red, 0; green, 0; blue, 0 }  ][line width=0.08]  [draw opacity=0] (10.72,-5.15) -- (0,0) -- (10.72,5.15) -- (7.12,0) -- cycle    ;
\draw    (153,105) .. controls (171.16,84.22) and (188.24,86.64) .. (197.66,99.63) ;
\draw [shift={(199.22,102.01)}, rotate = 239.04] [fill={rgb, 255:red, 0; green, 0; blue, 0 }  ][line width=0.08]  [draw opacity=0] (10.72,-5.15) -- (0,0) -- (10.72,5.15) -- (7.12,0) -- cycle    ;
\draw    (155.22,112.01) -- (191.22,112.01) ;
\draw [shift={(194.22,112.01)}, rotate = 180] [fill={rgb, 255:red, 0; green, 0; blue, 0 }  ][line width=0.08]  [draw opacity=0] (10.72,-5.15) -- (0,0) -- (10.72,5.15) -- (7.12,0) -- cycle    ;
\draw    (152,123) .. controls (166.46,137.26) and (185.23,137.07) .. (198.22,124.15) ;
\draw [shift={(200.22,122.01)}, rotate = 130.91] [fill={rgb, 255:red, 0; green, 0; blue, 0 }  ][line width=0.08]  [draw opacity=0] (10.72,-5.15) -- (0,0) -- (10.72,5.15) -- (7.12,0) -- cycle    ;
\draw    (394.22,59.01) -- (358.22,59.01) ;
\draw [shift={(355.22,59.01)}, rotate = 360] [fill={rgb, 255:red, 0; green, 0; blue, 0 }  ][line width=0.08]  [draw opacity=0] (10.72,-5.15) -- (0,0) -- (10.72,5.15) -- (7.12,0) -- cycle    ;
\draw    (412,48) .. controls (435.25,8.65) and (479.5,22.74) .. (489.19,42.51) ;
\draw [shift={(490.22,45.01)}, rotate = 251.57] [fill={rgb, 255:red, 0; green, 0; blue, 0 }  ][line width=0.08]  [draw opacity=0] (10.72,-5.15) -- (0,0) -- (10.72,5.15) -- (7.12,0) -- cycle    ;
\draw    (429.22,59.01) -- (479.22,59.01) ;
\draw [shift={(482.22,59.01)}, rotate = 180] [fill={rgb, 255:red, 0; green, 0; blue, 0 }  ][line width=0.08]  [draw opacity=0] (10.72,-5.15) -- (0,0) -- (10.72,5.15) -- (7.12,0) -- cycle    ;
\draw    (413,71) .. controls (430.58,101.89) and (479.62,91.79) .. (492.02,70.38) ;
\draw [shift={(493.22,68.01)}, rotate = 113.5] [fill={rgb, 255:red, 0; green, 0; blue, 0 }  ][line width=0.08]  [draw opacity=0] (10.72,-5.15) -- (0,0) -- (10.72,5.15) -- (7.12,0) -- cycle    ;
\draw    (393.22,187.01) -- (357.22,187.01) ;
\draw [shift={(354.22,187.01)}, rotate = 360] [fill={rgb, 255:red, 0; green, 0; blue, 0 }  ][line width=0.08]  [draw opacity=0] (10.72,-5.15) -- (0,0) -- (10.72,5.15) -- (7.12,0) -- cycle    ;
\draw    (411,176) .. controls (434.25,136.65) and (478.5,150.74) .. (488.19,170.51) ;
\draw [shift={(489.22,173.01)}, rotate = 251.57] [fill={rgb, 255:red, 0; green, 0; blue, 0 }  ][line width=0.08]  [draw opacity=0] (10.72,-5.15) -- (0,0) -- (10.72,5.15) -- (7.12,0) -- cycle    ;
\draw    (418.22,187.01) -- (478.22,187.01) ;
\draw [shift={(481.22,187.01)}, rotate = 180] [fill={rgb, 255:red, 0; green, 0; blue, 0 }  ][line width=0.08]  [draw opacity=0] (10.72,-5.15) -- (0,0) -- (10.72,5.15) -- (7.12,0) -- cycle    ;
\draw    (412,199) .. controls (429.58,229.89) and (478.62,219.79) .. (491.02,198.38) ;
\draw [shift={(492.22,196.01)}, rotate = 113.5] [fill={rgb, 255:red, 0; green, 0; blue, 0 }  ][line width=0.08]  [draw opacity=0] (10.72,-5.15) -- (0,0) -- (10.72,5.15) -- (7.12,0) -- cycle    ;

\draw (91,103.4) node [anchor=north west][inner sep=0.75pt]    {$1$};
\draw (143,103.4) node [anchor=north west][inner sep=0.75pt]    {$2$};
\draw (196,103.4) node [anchor=north west][inner sep=0.75pt]    {$3$};
\draw (107,147) node [anchor=north west][inner sep=0.75pt]   [align=left] {A quiver $\displaystyle Q$};
\draw (337,50.4) node [anchor=north west][inner sep=0.75pt]    {$\overline{\Bbbk}$};
\draw (401,49.4) node [anchor=north west][inner sep=0.75pt]    {$\overline{\Bbbk}^{10}$};
\draw (486,48.4) node [anchor=north west][inner sep=0.75pt]    {$\overline{\Bbbk}^{6}$};
\draw (443,6.4) node [anchor=north west][inner sep=0.75pt]    {$\varphi _{0}$};
\draw (442,43.4) node [anchor=north west][inner sep=0.75pt]    {$\varphi _{1}$};
\draw (443,73.4) node [anchor=north west][inner sep=0.75pt]    {$\varphi _{2}$};
\draw (372,39.4) node [anchor=north west][inner sep=0.75pt]    {$f$};
\draw (272,101) node [anchor=north west][inner sep=0.75pt]   [align=left] {A representation $\displaystyle V$ of $\displaystyle Q$ with \textbf{dim}$\displaystyle ( V) =( 1,10,6)$};
\draw (336,178.4) node [anchor=north west][inner sep=0.75pt]    {$0$};
\draw (400,177.4) node [anchor=north west][inner sep=0.75pt]    {$\overline{\Bbbk}$};
\draw (485,177.4) node [anchor=north west][inner sep=0.75pt]    {$\overline{\Bbbk}$};
\draw (443,135.4) node [anchor=north west][inner sep=0.75pt]    {$\varphi _{0}$};
\draw (441,170.4) node [anchor=north west][inner sep=0.75pt]    {$\varphi _{1}$};
\draw (442,204.4) node [anchor=north west][inner sep=0.75pt]    {$\varphi _{2}$};
\draw (371,170.4) node [anchor=north west][inner sep=0.75pt]    {$0$};
\draw (250,226) node [anchor=north west][inner sep=0.75pt]   [align=left] {A subrepresentation $\displaystyle M$ of $\displaystyle V$ with \textbf{dim}$\displaystyle ( M) =( 0,1,1)$};

\end{tikzpicture}
\caption{The quiver key to Theorem \ref{thm: Reineke thm}}
    \label{fig: example of q grass}
\end{figure}

Given a quiver $Q$ and a representation $V$ of $Q$, the \textbf{quiver grassmannian} $\text{Gr}_{\bm e}^{Q} (V)$ is the set of subrepresentations of $V$ with dimension vector $\bm e$. The subrepresentation $M$ in Figure \ref{fig: example of q grass} is an element of $\text{Gr}_{(0,1,1)}^{Q}(V)$. It is well known that quiver grassmannians are projective varieties. For more on quiver grassmannians, see [\ref{ref: quiv grassmannians notes}]. We also have the following result of Hille, Huisgen-Zimmermann, and Reineke. The wording below is consistent with Reineke's in [\ref{ref: every projective variety is a}]: 

\begin{thm} \label{thm: Reineke thm}
Every projective variety is isomorphic to a quiver Grassmannian $\text{Gr}^Q_{\bm e}(V)$ for an acyclic quiver Q with at most three vertices, a Schurian representation V, and a thin dimension vector $\bm e$; that is, $e_i \leq 1$ for all $i \in Q_0$. \hfill $\square$
\end{thm}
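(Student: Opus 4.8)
The plan is to realize the given variety inside a Veronese and then carve out that Veronese image as a quiver Grassmannian with the thin dimension vector $\bm e=(0,1,1)$ on the three-vertex quiver $Q$ of Figure~\ref{fig: example of q grass}. Fix an embedding $X\subseteq\mathbb{P}^n=\mathbb{P}(W)$, where $W$ is the $(n+1)$-dimensional space of linear forms in $x_0,\dots,x_n$ over the (algebraically closed, characteristic zero) base field $\overline{\Bbbk}$. First I would reduce to the case where $X$ is cut out by forms of a single common degree $d$: if $X=V(F_1,\dots,F_m)$ with $\deg F_j=d_j$, I replace each $F_j$ by the finite family of products $x^{\alpha}F_j$ with $|\alpha|=d-d_j$ and $d=\max_j d_j$. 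This alters neither the variety nor its reduced structure, but makes every defining form homogeneous of degree $d$. Passing through the $d$-uple Veronese $v_d\colon\mathbb{P}(W)\hookrightarrow\mathbb{P}(S^dW)$, which is an isomorphism onto its image, I may replace $X$ by $v_d(X)$; the degree-$d$ equations then become the linear conditions $\langle F_j,-\rangle$ on $S^dW$, so that $v_d(X)=\{[L^d]:[L]\in X\}$.

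Next I would build the representation $V$ of $Q$. Put $V_2=S^dW$, $V_3=S^{d-1}W$ and $V_1=\overline{\Bbbk}^m$, let the $n+1$ arrows $\varphi_0,\dots,\varphi_n\colon V_2\to V_3$ act as the partial derivatives $\partial/\partial x_0,\dots,\partial/\partial x_n$, and let the single arrow $f\colon V_2\to V_1$ record the $m$ apolarity pairings $G\mapsto(\langle F_1,G\rangle,\dots,\langle F_m,G\rangle)$, normalized so that $\langle F_j,L^d\rangle$ is a nonzero multiple of the value of $F_j$ at the point $[L]$. In the example of Figure~\ref{fig: example of q grass} this is the case $n=2$, $d=3$, $m=1$, giving $\mathbf{dim}\,V=(1,10,6)$ and recovering a plane cubic. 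A subrepresentation $M\subseteq V$ with $\mathbf{dim}\,M=(0,1,1)$ is then precisely the data of a line $M_2=\langle G\rangle\subseteq S^dW$ and a line $M_3\subseteq S^{d-1}W$ subject to $f(G)=0$ and $\partial_iG\in M_3$ for every $i$.

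The heart of the argument is the claim that such an $M$ forces $G$ to be a $d$-th power of a linear form. Writing $\partial_iG=c_iH$ for a fixed $H$ spanning $M_3$, Euler's identity gives $d\,G=\sum_i x_i\,\partial_iG=L\,H$ with $L=\sum_i c_ix_i$; substituting $G=\tfrac1d LH$ back into $\partial_iG=c_iH$ yields $L\,\partial_iH=(d-1)c_iH$, and a linear change of coordinates sending $L$ to a single variable then forces $H=L^{d-1}$ and $G=L^d$ up to scalars (here I use $\operatorname{char}\overline{\Bbbk}=0$, matching the paper's interest in $\mathbb{C}$). Conversely every $[L]\in\mathbb{P}(W)$ yields the subrepresentation $(0,\langle L^d\rangle,\langle L^{d-1}\rangle)$, and $f(L^d)=0$ holds exactly when all $F_j(L)=0$, i.e. when $[L]\in X$. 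This establishes a bijection $\text{Gr}^Q_{(0,1,1)}(V)\leftrightarrow X$.

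Finally I would promote this bijection to an isomorphism of varieties and verify the Schurian condition. For the former I use the standard closed embedding $\text{Gr}^Q_{(0,1,1)}(V)\hookrightarrow\mathbb{P}(S^dW)\times\mathbb{P}(S^{d-1}W)$; its image is the graph of $[L]\mapsto[L^{d-1}]$ over $v_d(X)$, so the projection to the first factor is an isomorphism onto $v_d(X)\cong X$. The genuinely delicate point, and the step I expect to be the main obstacle, is checking that $V$ is Schurian, i.e. $\operatorname{End}_Q(V)=\overline{\Bbbk}$: an endomorphism is a triple $(\theta_1,\theta_2,\theta_3)$ with $\theta_3\partial_i=\partial_i\theta_2$ for all $i$ and $\theta_1 f=f\theta_2$, and one must show that commuting with the entire gradient $(\partial_0,\dots,\partial_n)$ together with $f$ leaves only scalars. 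I would attack this by using that $G\mapsto(\partial_iG)_i$ is injective on $S^dW$ together with the commutation relations $[\partial_i,x_j]=\delta_{ij}$ and $\sum_j x_j\partial_j=d\cdot\mathrm{id}$ to express $\theta_2$ through $\theta_3$ and reduce to showing $\theta_3$ is scalar; should the plain construction fail to be a brick, I would modify $V$ (for instance by enlarging $V_1$ or adjoining a generic arrow) so as to rigidify $\operatorname{End}_Q(V)$ without disturbing the dimension-$(0,1,1)$ quiver Grassmannian.
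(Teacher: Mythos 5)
The paper itself gives no proof of Theorem \ref{thm: Reineke thm} --- it is quoted from Reineke, hence the terminal $\square$ --- so the right comparison is with Reineke's construction as recounted in Section 3 of the paper. Your construction is that construction up to a change of basis: the entry of $\partial_i G$ indexed by $n\in M_{2,d-1}$ is $(n_i+1)\,G_{n+e_i}$, so in characteristic zero your derivative maps are the columns of $A_d(x)$ composed with an invertible diagonal matrix, your apolarity form is $\nu_d(P)$ in coordinates, and your Euler-identity lemma is a correct proof of the standard fact that the rank-one locus of $A_d(x)$ is exactly the Veronese image (including the observation that $A_d(x)\neq 0$ for $x\neq 0$, so the line $M_3$ is forced and each point of $X$ gives a unique subrepresentation). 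The padding trick $F_j\mapsto x^{\alpha}F_j$ and the graph argument identifying $\text{Gr}^Q_{(0,1,1)}(V)$ with $\nu_d(X)$ are also fine.

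The genuine gap is the Schurian condition, which is part of the statement being proved and which you explicitly leave open; the fallback you offer (``enlarge $V_1$ or adjoin a generic arrow'') is not a proof and risks changing $\text{Gr}_{(0,1,1)}(V)$, defeating the purpose. Worse, as literally set up your $V$ can fail to be Schurian: after padding, the family of degree-$d$ forms is typically linearly dependent, so $f\colon S^dW\to\overline{\Bbbk}^m$ need not be surjective, and then any $\theta_1$ acting as $\lambda$ on $\operatorname{im} f$ and arbitrarily on a complement gives a non-scalar endomorphism. The fix is to take $V_1$ to be (a space identified with) the degree-$d$ graded piece of the ideal, i.e.\ make $f$ surjective, after which your plan closes --- and more cleanly via kernels than via the commutator relations you propose: for every direction $c$, the intertwining $\theta_3 D_c = D_c\theta_2$ with $D_c=\sum_i c_i\partial_i$ shows $\theta_2$ preserves $\ker D_c\cap S^dW$; intersecting over all $c$ annihilating a fixed linear form $L$ (in adapted coordinates these are $\partial_1,\dots,\partial_n$, with joint kernel $\langle x_0^d\rangle$) shows $\theta_2$ preserves the line $\langle L^d\rangle$ for every $L$. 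The resulting eigenvalue is a degree-zero homogeneous regular function on the irreducible Veronese cone, hence constant, and since that cone spans $S^dW$, the map $\theta_2$ is scalar; surjectivity of each $\partial_i$ then forces $\theta_3$ to be the same scalar, and surjectivity of $f$ forces $\theta_1$ scalar, so $\operatorname{End}_Q(V)=\overline{\Bbbk}$. With this step supplied (and the surjectivity normalization of $f$), your argument is a complete proof along the same lines as Reineke's.
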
 

\section{The 4-fold Equivalence}

\indent

Theorem \ref{thm: Reineke thm} relies on the $d$-uple Veronese embedding. We will use essentially the same idea to create a category of quiver Grassmannians equivalent to the third category listed in Theorem \ref{thm: 3- fold Categorical equivalence}.\\

Let $X\subset \mathbb{P}_{\overline{\Bbbk}}^2$ be a smooth projective curve. Thus $X$ is defined as the vanishing locus of a homogeneous polynomial $P$ of degree $d$ in three variables. Let $\nu_d:\mathbb{P}_{\overline{\Bbbk}}^2 \rightarrow \mathbb{P}_{\overline{\Bbbk}}^{\binom{d+2}{2} - 1}$ denote the $d$-uple Veronese embedding, which is an isomorphism onto its image since $\overline{\Bbbk}$ is a field of characteristic 0. Then by Reineke's result, Theorem \ref{thm: Reineke thm}, $\nu_d(X) = \text{Gr}_{(0,1,1)}(V)$ for $V$ a representation of the quiver $Q$ in Figure \ref{fig: example of q grass} of dimension $(1,M,M')$ where $M = \binom{d+2}{2}$ and $M' = \binom{d+1}{2}$. On the top right of Figure \ref{fig: example of q grass} is an example of $\nu_3(X)$ where $X$ is a projective curve defined by the vanishing locus of a degree $d = 3$ polynomial in three variables. \\

The condition of being in the image of the $d$-uple Veronese embedding is encoded into an $M' \times 3$ matrix $A_d(x)$, whose $2\times 2$ sub minors vanish. In particular, let $M_{2,d}$ be the set of tuples $m = (m_0, m_1, m_2) \in \mathbb{N}^3$ summing to $d$, so that $M$ is the cardinality of $M_{2,d}$, and $\nu_d$ maps homogeneous coordinates $[x_0 , x_1, x_2]$ to $[\dots, x^m, \dots]_{m\in M_{2,d}}$, where $x^m = x_0^{m_0}x_1^{m_1}x_2^{m_2}$. We define the matrix $A_d(x)$ with rows indexed by the $n \in M_{2,d-1}$ and columns indexed by the $i = 0, 1, 2$ with the $(2,i)$-th entry being $x_{n+e_i}$. Then $x\in \nu_d(X)$ if and only if $P(x) = 0$ and $A_d(x)$ has rank 1. We thus define $V$ as in Figure \ref{fig: example of q grass} with $f = \nu_d(P)$ and $\varphi_i$ the $i$th column of $A_d(x)$. Then $\nu_d(X) = \text{Gr}_{(0,1,1)}(V)$. For more on this construction, see [\ref{ref: every projective variety is a}]. \\

Fix the quiver $Q$ to be the one in Figure \ref{fig: example of q grass} and let $d \in \mathbb{Z}^{\geq0}$. Let $V_d^f$ be a $\overline{\Bbbk}$-representation of $Q$ such that $\textbf{dim}(V) = (1,M,M')$, $\varphi_i$ is the $i$th column of $A_d(x)$, and the preimage of the linear map $f$, denoted by  $\nu_d^{-1}(f)$, is irreducible as a homogeneous polynomial in $\overline{\Bbbk}[x_0, x_1, x_2]$. \\

\begin{defn} \label{defn: cat of quiver grass}
Define  $\mathcal{GR}_{(0,1,1)}(V)$ to be the category whose objects are $\text{Gr}_{(0,1,1)}(V_d^f)$ for all $d \in \mathbb{Z}^{\geq0}$ and any $f$ such that $\nu_d^{-1}(f)$ is irreducible, and whose morphisms are those of projective varieties. Let $\mathcal{GR}_{(0,1,1)}^{\text{sm}}(V)$ be the full subcategory of  $\mathcal{GR}_{(0,1,1)}(V)$ who's objects are smooth. \\
\end{defn} 

\begin{thm}\label{thm: iso of cats}
The category of non-singular projective curves in $\mathbb{P}^2_{\overline{\Bbbk}}$ with morphisms of varieties, (NPC) for short, is isomorphic to $\mathcal{GR}_{(0,1,1)}^{\text{sm}}(V)$.
\end{thm}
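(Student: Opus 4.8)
The plan is to exhibit two functors $F \colon \mathrm{(NPC)} \to \mathcal{GR}_{(0,1,1)}^{\text{sm}}(V)$ and $G$ in the reverse direction and to check that $G \circ F$ and $F \circ G$ are \emph{literally} the identity functors, not merely naturally isomorphic to them, which is what an isomorphism of categories demands. The engine of both constructions is the discussion preceding Definition \ref{defn: cat of quiver grass}: for a smooth plane curve $X = V(P)$ with $P$ homogeneous irreducible of degree $d$, Reineke's Theorem \ref{thm: Reineke thm} together with the fact that $\nu_d$ is an isomorphism onto its image (characteristic $0$) supplies a canonical isomorphism of varieties $\theta_X \colon X \xrightarrow{\sim} \nu_d(X) = \text{Gr}_{(0,1,1)}(V_d^f)$, where $f = \nu_d(P)$. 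Once the object correspondence is pinned down, the behaviour on morphisms will be forced by transport of structure along the $\theta_X$.

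First I would define $F$ on objects. A non-singular curve $X \subset \mathbb{P}^2_{\overline{\Bbbk}}$ is cut out by an irreducible homogeneous polynomial $P$, unique up to a nonzero scalar (as $\overline{\Bbbk}[x_0,x_1,x_2]$ is a unique factorization domain, $I(X) = (P)$), and its degree $d = \deg P$ is an invariant of $X$. Set $F(X) = \text{Gr}_{(0,1,1)}(V_d^f)$ with $f = \nu_d(P)$ and $\varphi_i$ the columns of $A_d(x)$. This is well defined: replacing $P$ by $\lambda P$ replaces $f$ by $\lambda f$, and since the component over vertex $1$ of a $(0,1,1)$-subrepresentation is $0$, the only condition imposed by the arrow $f\colon V_2 \to V_1$ is $f(M_2) = 0$, i.e. $M_2 \subset \ker f = \ker(\lambda f)$, so the two grassmannians coincide as subvarieties. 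Because $\theta_X$ is an isomorphism of varieties and $X$ is smooth, $F(X)$ is smooth, and $\nu_d^{-1}(f) = P$ is irreducible, so $F(X)$ genuinely lies in $\mathcal{GR}_{(0,1,1)}^{\text{sm}}(V)$. On morphisms I would send $g \colon X \to X'$ to $\theta_{X'} \circ g \circ \theta_X^{-1}$, a morphism of projective varieties between the two grassmannians; functoriality is immediate from this conjugation formula.

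Next I would define $G$ on objects by $\text{Gr}_{(0,1,1)}(V_d^f) \mapsto V(\nu_d^{-1}(f))$, reading $d$ and $f$ off the label of the object. This is a plane curve because $\nu_d^{-1}(f)$ is irreducible by the very definition of $\mathcal{GR}_{(0,1,1)}(V)$, and it is non-singular because it is isomorphic, via $\theta$, to the smooth grassmannian; hence $G$ lands in $\mathrm{(NPC)}$. On morphisms I transport in the opposite direction, $h \mapsto \theta^{-1} \circ h \circ \theta$. The two object assignments are then mutually inverse on the nose: $G(F(X)) = V(\nu_d^{-1}(\nu_d(P))) = V(P) = X$, and $F(G(\text{Gr}_{(0,1,1)}(V_d^f))) = \text{Gr}_{(0,1,1)}(V_d^{f'})$ with $f' = \nu_d(\nu_d^{-1}(f)) = f$. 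The conjugation formulas then force $G\circ F = \mathrm{id}$ and $F \circ G = \mathrm{id}$ on morphisms as well, since the intervening copies of $\theta$ cancel.

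The genuine content is concentrated at the object level, and the step I expect to be the main obstacle is verifying that the two defining conditions match up exactly across the correspondence, so that $F$ and $G$ are honest bijections rather than bijections only up to isomorphism. Concretely, I must confirm that (i) the irreducibility hypothesis built into $\mathcal{GR}_{(0,1,1)}(V)$ corresponds precisely to ``$X$ is a variety,'' so that $G$ never produces a reducible or non-reduced locus, and (ii) smoothness of the quiver grassmannian is equivalent to smoothness of the underlying curve, which is exactly what passing to the full subcategory $\mathcal{GR}_{(0,1,1)}^{\text{sm}}(V)$ secures through $\theta_X$. The remaining work is bookkeeping: checking that the scalar ambiguity in $P$ and the canonical isomorphism $\theta_X$ extracted from Reineke's construction are handled consistently, so that $F$ and $G$ are strictly inverse and not merely quasi-inverse.
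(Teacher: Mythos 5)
Your proposal is correct and follows essentially the same route as the paper: the paper likewise defines the functor on objects via the $d$-uple Veronese embedding (invoking Reineke's Theorem \ref{thm: Reineke thm} to identify $\nu_d(X)$ with $\text{Gr}_{(0,1,1)}(V_d^f)$), defines it on morphisms by conjugation $\nu_{d'}\circ\psi\circ\nu_d^{-1}$, and obtains the strict inverse by noting that the image of $\nu_d$ on degree-$d$ curves is exactly the collection of objects $\text{Gr}_{(0,1,1)}(V_d^f)$. Your treatment is in fact slightly more careful than the paper's on the bookkeeping points (uniqueness of $P$ up to scalar via $I(X)=(P)$, invariance of the grassmannian under $f\mapsto\lambda f$, and strictness of the inverse on objects), which the paper leaves implicit.
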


\begin{proof}
We begin by constructing a functor $\nu: \text{(NPC)}\rightarrow \mathcal{GR}_{(0,1,1)}^{\text{sm}}(V)$. For $X$ a non-singular projective curve cut out by a homogeneous polynomial of degree $d$, define $\nu(X) := \nu_d(X)$. Then $\nu(X)$ is non-singular since $X$ is and $\nu(X) \cong X$. Moreover, $\nu(X) = \text{Gr}_{(0,1,1)}(V_d^f)$ for $f$ the homogeneous polynomial of degree $d$ that cuts out $X$ by Reineke's result, Theorem \ref{thm: Reineke thm}. Thus $\nu(X) \in \mathcal{GR}_{(0,1,1)}^{\text{sm}}(V_d)$ and $\nu$ is well defined on objects. Given a morphism $\psi: X\rightarrow Y$ between two non-singular projective curves cut out by homogeneous polynomials of degree $d$ and $d'$ respectively, define $\nu(\psi): \nu(X) \rightarrow \nu(Y)$ by $\nu_{d'} \circ \psi \circ \nu_{d}^{-1}$. Since $\nu(\psi)$ is a morphism of varieties and $\nu$ preserves the identity and composition, $\nu$ defines a functor. \\

It is well known that the $d$-uple Veronese embedding is an isomorphism onto its image. Notice by construction, for any $d$, the image of $\nu_d$ restricted to projective curves cut out by a homogeneous polynomial of degree $d$ is equal to the collection of $\text{Gr}_{(0,1,1)}(V_d^f)$. This allows us to define $\nu^{-1}: \mathcal{GR}_{(0,1,1)}^{\text{sm}}(V) \rightarrow (\text{NPC})$ analogously to $\nu$, and these two functors are inverse. \end{proof}

\noindent

By taking $\Bbbk = \mathbb{C}$, an immediate consequence of Theorems \ref{thm: Reineke thm} and \ref{thm: iso of cats} is the following 4-fold categorical equivalence. 

\begin{cor}\label{cor: 4-fold eq}
The following four categories are equivalent:
\begin{enumerate}
    \item Compact connected Riemann Surfaces with holomorphic maps.
    \item Field extensions of transcendence degree one over $\mathbb{C}$ with field morphisms.
    \item Smooth projective curves in $\mathbb{P}^2_{\overline{\mathbb{C}}}$ with morphisms of varieties.
    \item $\mathcal{GR}_{(0,1,1)}^{\text{sm}}(V)$ where $V$ is a $\mathbb{C}$-representation of $Q$. \hfill $\square$
\end{enumerate}
\end{cor}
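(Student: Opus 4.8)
The plan is to splice the new category (4) onto the end of the three-fold chain of Theorem \ref{thm: 3- fold Categorical equivalence} by reusing the categorical isomorphism of Theorem \ref{thm: iso of cats}. The crucial preliminary observation is that specializing $\Bbbk = \mathbb{C}$ forces $\overline{\Bbbk} = \overline{\mathbb{C}} = \mathbb{C}$, since $\mathbb{C}$ is already algebraically closed; consequently the category (NPC) of non-singular projective curves in $\mathbb{P}^2_{\overline{\mathbb{C}}}$ appearing in Theorem \ref{thm: iso of cats} is literally the same category as item (3) of Theorem \ref{thm: 3- fold Categorical equivalence}, namely smooth projective curves in $\mathbb{P}^2_{\mathbb{C}}$ with morphisms of varieties. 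Here ``non-singular'' and ``smooth'' are synonyms, as recorded in the preliminaries, so no comparison functor between the two curve categories is needed; they coincide on the nose.

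With this identification in hand, I would carry out the argument in three short steps. First, invoke Theorem \ref{thm: 3- fold Categorical equivalence} to obtain the equivalences among categories (1), (2), and (3). Second, observe that Theorem \ref{thm: iso of cats} provides an \emph{isomorphism} of categories between $\text{(NPC)} = \text{(3)}$ and $\mathcal{GR}_{(0,1,1)}^{\text{sm}}(V) = \text{(4)}$; since every isomorphism of categories is in particular an equivalence, with the inverse functor serving as a quasi-inverse and the identity natural transformations witnessing the required isomorphisms $GF \cong \mathrm{id}$ and $FG \cong \mathrm{id}$, categories (3) and (4) are equivalent. Third, compose: equivalence of categories is transitive, so gluing the equivalence $\text{(3)} \simeq \text{(4)}$ onto the chain $\text{(1)} \simeq \text{(2)} \simeq \text{(3)}$ yields that all four categories are mutually equivalent.

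There is no substantive obstacle here; the corollary is genuinely immediate once the two input theorems are granted, which is why it is stated as a corollary rather than a theorem. The only points requiring care are bookkeeping. One must confirm that the base-field specialization really makes the two curve categories identical rather than merely equivalent, and one must recall that the categorical isomorphism of Theorem \ref{thm: iso of cats} is \emph{a fortiori} an equivalence, so that it can legitimately be composed with the weaker equivalences supplied by Theorem \ref{thm: 3- fold Categorical equivalence}. I would also remark that the deeper content feeding into the fourth category, namely Reineke's realization result (Theorem \ref{thm: Reineke thm}), has already been absorbed into the construction of $\nu$ used to prove Theorem \ref{thm: iso of cats}, so it need not be reinvoked directly in the corollary's proof.
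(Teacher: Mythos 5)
Your proposal is correct and matches the paper's intended argument: the paper proves the corollary by exactly this splice, taking $\Bbbk = \mathbb{C}$ (so $\overline{\mathbb{C}} = \mathbb{C}$ and (NPC) coincides with category (3) of Theorem \ref{thm: 3- fold Categorical equivalence}) and composing the known three-fold equivalence with the categorical isomorphism of Theorem \ref{thm: iso of cats}, which is \emph{a fortiori} an equivalence. Your added bookkeeping observations, including that Reineke's Theorem \ref{thm: Reineke thm} enters only through the construction underlying Theorem \ref{thm: iso of cats}, are consistent with the paper, which treats the corollary as immediate.
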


\vspace{.3cm}
\noindent 

Recall that by definition, elliptic curves are non-singular curves of genus one; however, every such curve can be written as the locus in $\mathbb{P}_{\overline{\Bbbk}}^2$ of a cubic equation with the base point on the line at $\infty$ [\ref{ref: elliptic curves reference}]. The next corollary follows from the fact that elliptic curves are the vanishing locus of an irreducible homogeneous polynomial of degree 3 and the fact that the functor $\nu$ restricts to an isomorphism, namely $\nu_3$. \\

\begin{cor} \label{cor: elliptic curves equiv to quiver grassmannians}
The category of elliptic curves in $\mathbb{P}_{\overline{\Bbbk}}^2$ with morhpisms of varieties is equivalent to $\mathcal{GR}_{(0,1,1)}^{\text{sm}}(V_3)$, the full subcategory of $\mathcal{GR}_{(0,1,1)}^{\text{sm}}(V)$ whose objects are $\text{Gr}_{(0,1,1)}^{\text{sm}}(V_3^f)$ for $f$ such that $\nu_3^{-1}(f)$ is irreducible. \qed
\end{cor}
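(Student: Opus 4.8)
The plan is to derive Corollary \ref{cor: elliptic curves equiv to quiver grassmannians} as a direct restriction of the isomorphism constructed in Theorem \ref{thm: iso of cats}. The key observation is the classical fact, cited from [\ref{ref: elliptic curves reference}], that every elliptic curve over $\overline{\Bbbk}$ can be realized in $\mathbb{P}^2_{\overline{\Bbbk}}$ as the vanishing locus of an irreducible homogeneous cubic (a Weierstrass equation), with the base point placed at infinity. Thus the category of elliptic curves embeds as a full subcategory of (NPC): its objects are precisely those non-singular projective plane curves arising from an irreducible degree-$3$ polynomial, and its morphisms are morphisms of varieties. I would first make this identification explicit, noting that since we only require the degree to be exactly $3$, the relevant target is controlled entirely by $\nu_3$.

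Next I would restrict the functor $\nu$ from Theorem \ref{thm: iso of cats} to this subcategory. For an elliptic curve $E$ cut out by an irreducible cubic $f$, we have $\nu(E) = \nu_3(E) = \text{Gr}_{(0,1,1)}(V_3^f)$, and the irreducibility of $f = \nu_3^{-1}(f)$ is exactly the defining condition on objects of $\mathcal{GR}_{(0,1,1)}^{\text{sm}}(V_3)$. On morphisms, the formula $\nu(\psi) = \nu_{3} \circ \psi \circ \nu_{3}^{-1}$ specializes, since both source and target curves have degree $3$; here it is worth remarking that a morphism of elliptic curves as varieties need not preserve the group structure, so the equivalence being claimed is one of the underlying varieties, consistent with the phrasing ``morphisms of varieties.'' I would then verify that $\nu$ lands in $\mathcal{GR}_{(0,1,1)}^{\text{sm}}(V_3)$ and that its inverse $\nu^{-1}$, likewise restricted, sends each $\text{Gr}_{(0,1,1)}^{\text{sm}}(V_3^f)$ with $\nu_3^{-1}(f)$ irreducible back to a non-singular plane cubic, hence to an elliptic curve up to choice of base point.

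The main subtlety, and the step I expect to require the most care, is matching the object classes on the nose rather than merely up to isomorphism. On the quiver-Grassmannian side, $\mathcal{GR}_{(0,1,1)}^{\text{sm}}(V_3)$ by Definition \ref{defn: cat of quiver grass} consists of all smooth $\text{Gr}_{(0,1,1)}(V_3^f)$ with $\nu_3^{-1}(f)$ irreducible, whereas an elliptic curve additionally carries a distinguished base point. Since the statement only asserts an equivalence, not an isomorphism, this discrepancy is harmless: every smooth irreducible plane cubic admits a base point making it an elliptic curve, and different choices yield isomorphic elliptic curves as varieties, so the restricted functor is essentially surjective and fully faithful by the same argument as in Theorem \ref{thm: iso of cats}. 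Because $\nu_3$ is an isomorphism onto its image, fullness and faithfulness are inherited directly from the ambient isomorphism of categories, leaving only essential surjectivity to confirm via the Weierstrass normal form.

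In summary, the proof is a formal restriction argument: identify elliptic curves with smooth irreducible plane cubics via Weierstrass form, restrict $\nu$ and $\nu^{-1}$ from Theorem \ref{thm: iso of cats} to degree $3$, and observe that the object-level conditions coincide exactly with those cutting out $\mathcal{GR}_{(0,1,1)}^{\text{sm}}(V_3)$. The hard part is purely bookkeeping around the base point and the distinction between equivalence and isomorphism of categories; no new geometric input beyond Theorem \ref{thm: iso of cats} and the Weierstrass normal form is needed.
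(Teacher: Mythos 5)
Your proposal is correct and matches the paper's own argument: the paper justifies Corollary \ref{cor: elliptic curves equiv to quiver grassmannians} in one line by noting that elliptic curves are vanishing loci of irreducible homogeneous cubics (Weierstrass form with base point at infinity) and that the isomorphism $\nu$ of Theorem \ref{thm: iso of cats} restricts to $\nu_3$ on this full subcategory. Your extra bookkeeping about base points --- explaining why the result is stated as an equivalence of categories of underlying varieties rather than an isomorphism --- is a careful elaboration the paper leaves implicit, not a different route.
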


\begin{rem}
Since in Corollary \ref{cor: elliptic curves equiv to quiver grassmannians} we do not take $\Bbbk = \mathbb{C}$, this equivalence along with the linear-algebraic nature of representations of quivers may prove to be useful in the study of rational points of elliptic curves. Moreover, one may be able to use the moduli space of quiver grassmannians to study that of elliptic curves and vice versa. \\
\end{rem}

Corollary \ref{cor: elliptic curves equiv to quiver grassmannians} also provides us with a way to remove the artificial imposition of smoothness in Definition \ref{defn: cat of quiver grass}. In determining smoothness of $\text{Gr}_{(0,1,1)}(V_d^f)$, it suffices to check the Jacobian criterion on the equations that cut out the quiver grassmannian; however, after embedding into a higher dimensional projective space there can be several of these equations and checking this criterion can quickly become computationally expensive. We do however have the following proposition.

\begin{prop}
Suppose the characteristic of $\Bbbk$ is not 2 or 3 and consider a quiver grassmannian $\text{GR}_{(0,1,1)}(V_3^f)$. Then  $\text{GR}_{(0,1,1)}(V_3^f)$ is smooth if and only if it is isomorphic to $\text{GR}_{(0,1,1)}(V_3^{\xi})$ where $\xi = x_7 - x_0 - ax_5 - bx_9$ and $4a^3 + 27b^2 \neq 0$.
\end{prop}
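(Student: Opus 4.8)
The plan is to reduce smoothness of $\text{GR}_{(0,1,1)}(V_3^f)$ to the classical theory of plane cubics via the isomorphism $\nu_3$. By Theorem~\ref{thm: iso of cats}, $\text{GR}_{(0,1,1)}(V_3^f) \cong \nu_3(X)$ where $X = V(\nu_3^{-1}(f)) \subset \mathbb{P}^2_{\overline{\Bbbk}}$ is the plane cubic cut out by the irreducible homogeneous degree-$3$ polynomial $P := \nu_3^{-1}(f)$. Since $\nu_3$ is an isomorphism onto its image, $\text{GR}_{(0,1,1)}(V_3^f)$ is smooth if and only if $X$ is a smooth plane cubic. So the entire statement is equivalent to the purely classical assertion: a smooth irreducible plane cubic over a field of characteristic $\neq 2,3$ can, after a projective change of coordinates, be written in Weierstrass form $Y^2Z = X^3 + aXZ^2 + bZ^3$ with nonvanishing discriminant $4a^3 + 27b^2 \neq 0$, and conversely any such Weierstrass equation with $4a^3+27b^2 \neq 0$ defines a smooth curve.

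First I would make explicit the correspondence between the Weierstrass polynomial and the linear form $\xi$ in the statement. The indexing convention preceding Theorem~\ref{thm: Reineke thm} sends homogeneous coordinates to the monomials $x^m$ for $m \in M_{2,3}$; one must verify that under this monomial indexing, the degree-$3$ monomials $Y^2Z$, $X^3$, $XZ^2$, $Z^3$ correspond precisely to the coordinates $x_7, x_0, x_5, x_9$ appearing in $\xi = x_7 - x_0 - a x_5 - b x_9$. This is a bookkeeping step: fix the ordering of $M_{2,3}$, read off which index each monomial receives, and confirm $\xi = \nu_3(Y^2Z - X^3 - aXZ^2 - bZ^3)$ up to sign, so that $\nu_3^{-1}(\xi)$ is exactly the Weierstrass cubic. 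The condition $4a^3 + 27b^2 \neq 0$ is then recognized as the standard nonvanishing of the discriminant of the short Weierstrass equation, which in characteristic $\neq 2,3$ is equivalent to smoothness of the associated cubic.

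With these translations in place, the forward direction proceeds as follows: assuming $\text{GR}_{(0,1,1)}(V_3^f)$ is smooth, the curve $X$ is a smooth irreducible plane cubic, hence has genus one and (fixing a rational point, which exists over $\overline{\Bbbk}$) is an elliptic curve. By the standard reduction to Weierstrass form, valid whenever $\operatorname{char}\Bbbk \neq 2,3$ so that one may complete the square and the cube, $X$ is isomorphic via a projective linear change of coordinates to a curve $X'$ cut out by a short Weierstrass polynomial with $4a^3+27b^2 \neq 0$. Transporting this isomorphism through $\nu_3$ — using again that $\nu_3$ is an isomorphism onto its image and that a linear change of coordinates on $\mathbb{P}^2$ induces an isomorphism of the corresponding quiver Grassmannians — yields $\text{GR}_{(0,1,1)}(V_3^f) \cong \text{GR}_{(0,1,1)}(V_3^{\xi})$. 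The converse direction is easier: if $\text{GR}_{(0,1,1)}(V_3^f) \cong \text{GR}_{(0,1,1)}(V_3^{\xi})$, then $X \cong V(\nu_3^{-1}(\xi))$, and since $4a^3+27b^2 \neq 0$ forces the Weierstrass cubic to be smooth, $X$ is smooth, whence so is the quiver Grassmannian.

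\textbf{The main obstacle} I anticipate is the characteristic hypothesis and the precise coordinate bookkeeping rather than any deep geometry. Completing the square requires $\operatorname{char} \neq 2$ and completing the cube requires $\operatorname{char} \neq 3$; these are exactly the excluded characteristics, so the reduction to short Weierstrass form is the step that consumes the hypothesis, and I would flag clearly where each is used. The second delicate point is justifying that a linear automorphism of $\mathbb{P}^2$ lifts to an isomorphism of the quiver Grassmannians compatibly with $\nu_3$ — this follows from functoriality of $\nu$ established in Theorem~\ref{thm: iso of cats}, but it should be stated so the reader sees that the Weierstrass normalization on the curve side genuinely transports to the representation-theoretic side. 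The monomial-index identification of $\xi$ is routine once the ordering is fixed, but it is the one place an error could silently enter, so I would verify it directly.
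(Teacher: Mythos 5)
Your proposal is correct and takes essentially the same route as the paper: both reduce smoothness of $\text{Gr}_{(0,1,1)}(V_3^f)$ through the isomorphism $\nu_3$ to the classical fact that a smooth plane cubic in characteristic $\neq 2,3$ is an elliptic curve admitting the short Weierstrass form $y^2z = x^3 + axz^2 + bz^3$, whose image under $\nu_3(x,y,z)=(x^3,x^2y,x^2z,xy^2,xyz,xz^2,y^3,y^2z,yz^2,z^3)$ is cut out by $\xi = x_7 - x_0 - ax_5 - bx_9$, smooth if and only if $4a^3+27b^2\neq 0$. Your monomial bookkeeping agrees with the paper's explicit coordinate list, and your extra care on the converse direction and on transporting the Weierstrass normalization through the functor $\nu$ merely makes explicit what the paper's terser proof leaves implicit.
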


\begin{proof}
By definition, a curve in $\mathbb{P}_{\overline{\Bbbk}}^2$ cut out by a degree 3 homogeneous polynomial is smooth if and only if it is an elliptic curve. In the case of elliptic curves however, it is known that each curve can be written in reduced Weierstrass form as $y^2 = x^3 + ax + b$ when the field is not characteristic 2 or 3 [\ref{ref: elliptic curves reference}]. Upon realizing this in homogeneous coordinates, we get the equation $y^2z - x^3- axz^2 - bz^3 = 0$. To attain the corresponding quiver grassmannian, we analyze the 3-uple Veronese embedding:
\vspace{-.1 cm}
$$\nu_3(x,y,z) = (x^3,x^2y,x^2z,xy^2,xyz,xz^2,y^3,y^2z,yz^2,z^3).$$
\vspace{.08 cm}
Relabeling the $\mathbb{P}_{\overline{\Bbbk}}^9$ coordinates as $(x_0, x_1, \dots , x_9)$, any elliptic curve is the solution set to $x_7 - x_0 - ax_5 - bx_9 = 0$. Letting $\xi = x_7 - x_0 - ax_5 - bx_9$, the corresponding quiver grassmannian is $\text{Gr}_{(0,1,1)}(V_3^{\xi})$. Now by Corollary \ref{cor: elliptic curves equiv to quiver grassmannians}, we have that $\text{Gr}_{(0,1,1)}(V_3^{\xi})$ is an object of $\mathcal{GR}_{(0,1,1)}^{\text{sm}}(V_3)$ if and only if $\xi$ is smooth, which occurs if and only if $4a^3 + 27b^2 \neq 0$ [\ref{ref: elliptic curves reference}]. 
\end{proof}

Using Corollary \ref{cor: elliptic curves equiv to quiver grassmannians} we can also see that the objects of $\text{GR}_{(0,1,1)}^{\text{sm}}(V_3^f)$ can be endowed with a commutative group structure inherited from that of elliptic curves. In the case $\Bbbk = \mathbb{C}$, we can use Corollary \ref{cor: 4-fold eq} to further state that these quiver grassmannians are also isomorphic to connected compact Riemann surfaces of genus 1, hence complex tori. \\

\begin{cor}
Let $X$ be a connected compact Riemann surface. Then the following are equivalent:
\begin{enumerate}
    \item $X$ has genus 1.
    \item $X$ has a structure of an analytic group.
    \item $X$ has a commutative analytic group structure.
    \item $X\cong {\mathbb{C} \over \mathbb{Z}l + \mathbb{Z}w}$
    \item $X \cong C_F$ where $C_F$ is an elliptic curve. 
    \item $X\cong \text{Gr}_{(0,1,1)}^{\text{sm}}(V_3^f)$ for some $f. \hfill \square$
\end{enumerate}
\end{cor}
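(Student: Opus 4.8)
The plan is to close a single cycle of implications through the three classical items $1$, $4$, $5$, and then attach the group-theoretic items $2$, $3$ and the quiver-grassmannian item $6$ as short offshoots. The bulk of the argument is classical Riemann-surface theory; the only genuinely new ingredient is the equivalence $5 \iff 6$, which will be essentially a restatement of Corollary \ref{cor: elliptic curves equiv to quiver grassmannians} together with the $4$-fold equivalence of Corollary \ref{cor: 4-fold eq}.

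First I would dispatch the easy implications among the group structures. The complex torus $\mathbb{C}/(\mathbb{Z}l + \mathbb{Z}w)$ is the quotient of the additive complex Lie group $(\mathbb{C},+)$ by the discrete subgroup $\mathbb{Z}l + \mathbb{Z}w$, so it carries a manifestly commutative analytic group structure; this gives $4 \Rightarrow 3$, and $3 \Rightarrow 2$ is trivial. For the classical core, $1 \iff 4$ follows from uniformization: a compact connected Riemann surface of genus $1$ has universal cover biholomorphic to $\mathbb{C}$ (genus $1$ excludes both $\mathbb{P}^1$ and the disc), and the deck group acts by translations as a rank-$2$ lattice $\Lambda = \mathbb{Z}l + \mathbb{Z}w$, whence $X \cong \mathbb{C}/\Lambda$; the converse is immediate since such a quotient has genus $1$. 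Then $4 \Rightarrow 5$ is the Weierstrass embedding $z \mapsto [\wp(z) : \wp'(z) : 1]$, which realizes $\mathbb{C}/\Lambda$ as a smooth plane cubic, i.e. an elliptic curve $C_F$, and $5 \Rightarrow 1$ holds by definition, an elliptic curve being a smooth curve of genus $1$ (here passing between the projective curve $C_F$ and the Riemann surface $X$ via Theorem \ref{thm: 3- fold Categorical equivalence}). This closes the cycle $1 \Rightarrow 4 \Rightarrow 5 \Rightarrow 1$ and simultaneously ties in $3$ and $2$ through $4 \Rightarrow 3 \Rightarrow 2$.

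It then remains to feed $2$ back into the cycle and to attach $6$. For $2 \Rightarrow 1$, I would use that a one-dimensional compact connected complex Lie group has holomorphically trivial tangent bundle, the left-invariant vector fields furnishing a global nowhere-vanishing holomorphic vector field; since $\deg TX = 2 - 2g$, triviality forces $g = 1$. For $5 \iff 6$, take $\Bbbk = \mathbb{C}$, so $\overline{\Bbbk} = \mathbb{C}$; then Corollary \ref{cor: elliptic curves equiv to quiver grassmannians} says $\nu_3$ is an isomorphism of categories between elliptic curves and $\mathcal{GR}_{(0,1,1)}^{\text{sm}}(V_3)$, and composing with the $4$-fold equivalence of Corollary \ref{cor: 4-fold eq} to identify $X$ with its associated smooth projective curve yields $X \cong C_F$ for an elliptic curve $C_F$ exactly when $X \cong \text{Gr}_{(0,1,1)}^{\text{sm}}(V_3^f)$ for some $f$.

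The main obstacle, relative to the otherwise routine ingredients, is the implication $2 \Rightarrow 1$: it is the one step that genuinely requires the structure theory of compact complex Lie groups (equivalently, the fact that a compact connected complex Lie group of dimension one is a torus). Every remaining implication is either a direct citation of the preceding corollaries or a standard appeal to uniformization and Weierstrass $\wp$-theory, so I expect the write-up to be short once $2 \Rightarrow 1$ is handled carefully.
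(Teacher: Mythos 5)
Your proposal is correct and follows essentially the same route the paper intends: the paper treats the equivalences among (1)--(5) as classical (the genus-one/complex-torus/elliptic-curve circle of ideas from uniformization and Weierstrass $\wp$-theory, via Theorem \ref{thm: 3- fold Categorical equivalence}), and attaches item (6) exactly as you do, by combining Corollary \ref{cor: 4-fold eq} with Corollary \ref{cor: elliptic curves equiv to quiver grassmannians} over $\Bbbk = \mathbb{C}$. The only difference is that you supply full proofs of the classical implications --- including the trivial-tangent-bundle degree argument for $2 \Rightarrow 1$, which is sound since a left-invariant holomorphic vector field trivializes $TX$ and $\deg TX = 2-2g$ forces $g=1$ --- where the paper simply cites the standard theory.
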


\section{References}

\begin{enumerate}[ {[}1{]} ]
\item Cerulli Irelli, G. \textit{Three lectures on quiver Grassmannians: Representation theory and beyond}, Contemp. Math., vol. 758, Amer. Math. Soc., [Providence], RI, 2020, pp. 57 – 89, DOI 10.1090/conm/758/15232. MR4186968 \label{ref: quiv grassmannians notes} 

\item Hartshorne, R. \textit{Algebraic geometry}, Springer, Volume 52, 1977.\label{ref: harshorne}

\item Hille, L. \textit{Moduli of representations, quiver Grassmannians and Hilbert schemes}, Preprint, arXiv:1505.06008, [math.RT], 2015. \label{ref: Hille generalized Kronecker}

\item Huisgen-Zimmermann, B. \textit{Classifying representations by way of Grassmannians}, Trans. Amer. Math. Soc. 359 (2007), no. 6, 2687–2719, DOI 10.1090/S0002-9947-07-03997-9. MR2286052 \label{ref: H-Z}

\item Reineke, M. \textit{Every projective variety is a quiver Grassmannian}, Algebras and Representation Theory, \textbf{16}, 1313-1314, 2013. \label{ref: every projective variety is a}

\item Ringel, C. \textit{The eigenvector variety of a matrix pencil}, Linear Algebra Appl., \textbf{531} (2017), 447-458. \label{ref: generalized Kronecker}

\item Ringel, C. \textit{Quiver Grassmannians and Auslander varieties for wild algebras}, J. of Alg., Volume 402, 351-357, 2014 \label{ref: ringel Auslander varieties}

\item Ringel, C. \textit{Quiver Grassmannians for wild acyclic quivers}, Proc. AMS, \textbf{146}, n. 5, 2018. \label{ref: generalizaion of Reineke result}

\item Schofield, A. \textit{General representations of quivers}, Proc. London Math. Soc. (3) 65 (1992), no. 1, 46–64.\label{ref: quiver grassmannians intro}

\item Silverman, Joseph H. \textit{The arithmetic of elliptic curves}, Springer, Volume 106, 2009. \label{ref: elliptic curves reference}
\end{enumerate}

\end{document}